\documentclass[12pt,a4paper]{amsart}
\usepackage[utf8]{inputenc}
\usepackage[T1]{fontenc}
\usepackage[english]{babel}
\usepackage{amsmath}
\usepackage{amsfonts}
\usepackage{amssymb}
\usepackage{amsthm}
\usepackage{mathtools}
\usepackage{tabularx}
\usepackage{adjustbox}
\usepackage{graphicx}
\usepackage{tikz}
\usetikzlibrary{calc}
\usetikzlibrary{patterns.meta}

\usepackage{fullpage}
\usepackage{indentfirst}
\usepackage{fancyvrb}
\usepackage{comment}
\usepackage{lmodern}
\usepackage{xcolor}
\usepackage{listings}
\usepackage{url}
\usepackage{float}
\usepackage{diagbox}
\usepackage{hyperref}
\numberwithin{equation}{section}

\theoremstyle{plain}
\newtheorem{thm}{Theorem}[section]

\newtheorem{lm}[thm]{Lemma}

\theoremstyle{definition}
\newtheorem{df}[thm]{Definition}

\theoremstyle{remark}
\newtheorem{re}[thm]{Remark}

\DeclareRobustCommand{\svdots}{%
  \vbox{%
    \baselineskip=0.33333\normalbaselineskip
    \lineskiplimit=0pt
    \hbox{.}\hbox{.}\hbox{.}%
    \kern-0.2\baselineskip
  }%
}

\makeatletter
\let\@@pmod\pmod
\DeclareRobustCommand{\pmod}{\@ifstar\@pmods\@@pmod}
\def\@pmods#1{\mkern4mu({\operator@font mod}\mkern 6mu#1)}
\makeatother

\title{The asymptotic behavior of the rectangle partition function $p(m,n)$}

\author{Krystian Gajdzica}

\address{Theoretical Computer Science Department \\ Faculty of Mathematics and Computer Science\\ Jagiellonian University\\ Łojasiewicza 6\\ 30-348 Kraków\\ Poland}
\email{krystian.gajdzica@uj.edu.pl}

\author{Maciej Zakarczemny}

\address{Department of Applied Mathematics, Faculty of Computer Science and Mathematics, Cracow University of Technology, Warszawska 24, 31-155 Krak\'ow, Poland}

\email{maciej.zakarczemny@pk.edu.pl}

\keywords{partition; partition function; rectangle partition; asymptotic; harmonic number.}

\subjclass[2020]{Primary 05A16, 11P81; Secondary 05A15, 11P83.}

\begin{document}
\maketitle

\begin{abstract}
Let $p(m,n)$ denote the number of partitions of a rectangle $m\times n$ into integer-sided rectangular blocks, where two partitions are indistinguishable if they consist of the same multiset of blocks, regardless of their geometric arrangement. We present an elementary approach to show that, for every fixed positive integer $m$,
$$
\log p(m,n)=\pi\sqrt{\tfrac{2mH_m}{3}}\sqrt{n}+O(\log n),
\qquad \text{as }n\to\infty,
$$
where $H_m$ denotes the $m$-th harmonic number. This confirms a conjecture posed in~\cite{GZ} and generalizes the Hardy--Ramanujan formula for integer partitions.
\end{abstract}


\section{Introduction}

Let $m$ and $n$ be positive integers. A partition of the rectangle $m\times n$ is a multiset of integer-sided rectangular blocks that can be arranged, with sides parallel to the coordinate axes, in such a way that they cover exactly the rectangle $m\times n$ and their interiors do not intersect. Blocks of sizes $a\times b$ and $b\times a$ are indistinguishable, and two partitions of a rectangle are considered to be the same if they are equal as multisets (the geometric arrangement of the blocks is irrelevant). The total number of partitions of the rectangle $m\times n$ is denoted by $p(m,n)$. For the precise definitions of a partition of a rectangle and the rectangle partition function, we refer the reader to Section~\ref{sec: preliminaries}. The above notion was introduced in \cite{GVZ} as a two-dimensional generalization of the classical partition function $p(n)$. Indeed, every partition of the rectangle $1\times n$ has the form $1\times \lambda_1,1\times \lambda_2,\ldots,1\times \lambda_r,$ and directly corresponds to the integer partition $\lambda_1,\lambda_2,\ldots,\lambda_r$ of $n$. Hence, $p(1,n)=p(n)$, and the Hardy--Ramanujan formula \cite{HardyRamanujan} gives

$$
\log p(1,n)=\pi\sqrt{\tfrac{2n}{3}}+O(\log n).
$$
For $m=2$, the asymptotic behavior of $p(2,n)$ was recently determined in \cite{GVZ}; in particular,
$$
\log p(2,n)=\pi\sqrt{2n}+O(\log n).
$$
For $m=3$, it was proved in \cite{GZ} that
$$
\log p(3,n)=\pi\sqrt{\tfrac{11n}{3}}+O(\log n).
$$
Let $H_m=\sum_{a=1}^{m}\tfrac1a$ denote the $m$-th harmonic number. The leading term in each of the three asymptotic formulae above agrees with the leading term on the right-hand side of
\begin{equation}\label{eq: conjecture}
\log p(m,n)=\pi\sqrt{\tfrac{2mH_m}{3}}\sqrt{n}+o\left(\sqrt n\right),
\qquad \text{as }n\to\infty,
\end{equation}
which was conjectured in \cite{GZ} for every fixed positive integer $m$. The purpose of this paper is to establish the conjecture in the following stronger form.

\begin{thm}\label{thm: main}
Let $m$ be a fixed positive integer. As $n\to\infty$, we have
$$
\log p(m,n)=\pi\sqrt{\tfrac{2mH_m}{3}}\sqrt{n}+O(\log n).
$$
Moreover, $p(m,n)\le\exp\left(\pi\sqrt{\tfrac{2mH_m}{3}}\sqrt n\right)$ for all $n\geq m$.
\end{thm}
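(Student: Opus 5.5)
Both bounds reduce to counting multisets of blocks by total area. A block $a\times b$ with $a\le b$ fits into the $m\times n$ rectangle only if $a\le m$ (and $b\le n$). So every partition of the rectangle is a multiset of blocks $a\times b$ with $1\le a\le m$, $a\le b\le n$ and total area $mn$. For the upper bound I would let $q(N)$ denote the number of multisets of such blocks (with $a\le m$, $b$ unrestricted) of total area $N$; then $p(m,n)\le q(mn)$. Its generating function is
$$
Q(x)=\prod_{a=1}^{m}\prod_{b\ge a}\frac{1}{1-x^{ab}}.
$$
By the standard saddle-point inequality $q(N)\le x^{-N}Q(x)$ for $0<x<1$, so with $x=e^{-t}$ we get
$$
\log q(N)\le Nt+\sum_{a=1}^m\sum_{b\ge a}-\log(1-e^{-abt}).
$$
I would bound the inner sum by dropping the restriction $b\ge a$ (harmless for an upper bound) and using $\sum_{b\ge1}-\log(1-e^{-abt})=\sum_{k\ge1}\frac{1}{k}\frac{1}{e^{akt}-1}\le \frac{1}{at}\zeta(2)$, via $e^{y}-1\ge y$. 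This gives $\log q(N)\le Nt+\frac{\pi^2H_m}{6t}$. Optimizing with $t=\pi\sqrt{H_m/(6N)}$ yields $\log q(N)\le \pi\sqrt{2H_mN/3}$, and with $N=mn$ this is exactly the clean bound $p(m,n)\le\exp\bigl(\pi\sqrt{2mH_m/3}\sqrt n\bigr)$. It holds for all $n$; the hypothesis $n\ge m$ is only needed so that all $m$ block widths actually occur, which I do not need here.

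For the lower bound I need many multisets that genuinely tile $m\times n$. I would fix a tileable family as follows. For each $a\in\{1,\dots,m\}$, take a partition $\mu^{(a)}$ of some integer $n_a$. View the rectangle as $m$ horizontal strips of height $1$, and group them into $H$-weighted pieces: use a product $L=\operatorname{lcm}(1,\dots,m)$-independent construction in which strips of height $a$ and length $n_a$ are cut into blocks $a\times\mu^{(a)}_i$. Concretely, split the $m\times n$ rectangle into column regions; in a region of width $n_a$ stack $m/a$ copies when $a\mid m$. Since $a$ need not divide $m$, I would use instead a layered construction: the region of width $w_a$ is filled by $\lfloor m/a\rfloor$ rows of $a\times\mu_i$ blocks plus a filler strip of height $m-a\lfloor m/a\rfloor$ made of one long block. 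Only the multiset $\mu^{(a)}$ repeated $\lfloor m/a\rfloor$ times is then determined, which loses entropy. The better choice is to let each of the $\lfloor m/a\rfloor$ rows use an independent partition of $w_a$. The partitions of different rows of the same height $a$ merge into one multiset, so the count is the number of multisets with row-length constraints. The clean way is to note that the multiset determines at most polynomially many arrangements up to the fixed filler blocks, so the number of distinct multisets is at least $\prod_a p(w_a)^{c}/\mathrm{poly}$.

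Finally I would optimize the widths $w_a$ subject to $\sum w_a=n$. This is a Lagrange problem of the form $\max\sum_a\pi\sqrt{2 k_a w_a/3}$, which should reproduce $\sqrt{mH_m n}$. The main obstacle is exactly this lower-bound construction: recovering the full constant $mH_m$ rather than $\sum_a\lfloor m/a\rfloor$, and keeping the error $O(\log n)$. To get an error of $O(\log n)$ rather than $o(\sqrt n)$, I would work instead with the exact counting function of multisets of blocks of area $N$, restricted to those that fit. I would prove that any multiset of blocks $a\times b$ with $b\le n-C$ (for a constant $C=C(m)$) and area $mn-r$, with $r$ in a bounded set, can be completed and tiled by a greedy strip-packing argument. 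Then $p(m,n)\ge q'(mn-O(1))$, and Meinardus-type saddle-point asymptotics for $Q$, done elementarily, give $\log q'=\pi\sqrt{2H_m mn/3}+O(\log n)$. Proving that tileability lemma is where I expect the real work to lie.
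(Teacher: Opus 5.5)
Your upper bound is correct and is exactly the paper's argument: the Euler-product generating function for canonical blocks with smaller side at most $m$, the bound $e^y-1\ge y$ giving $\pi^2/(6at)$ per height, and the choice $t=\pi\sqrt{H_m/(6mn)}$. You are also right that it needs no restriction on $n$.

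The lower bound, however, is not established, and both routes you sketch fail as stated.

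First, you assert that the merged multiset of $q_a=\lfloor m/a\rfloor$ independent row partitions determines the rows up to polynomially many choices. This is false: a multiset of total width $q_aw_a$ can typically be split into $q_a$ rows of width $w_a$ in exponentially many ways. For $m=2$, a count of $p(w_1)^2p(w_2)/\mathrm{poly}$ would give exponent $\pi\sqrt{2/3}\,\sqrt{5n}$, contradicting your own upper bound $\pi\sqrt{2/3}\,\sqrt{3n}$. The honest count corresponds to your $k_a=q_a$. Cauchy--Schwarz then caps the exponent at $\pi\sqrt{2/3}\sqrt{n\sum_a\lfloor m/a\rfloor}$, which is strictly below the target for every $m\ge3$ (e.g.\ $5<11/2$ for $m=3$), as you suspect.

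Second, your proposed fix is false. It claims that every multiset of blocks with $b\le n-C$ and area $mn-r$, $r$ bounded, can be completed to a tiling. Take $m=3$: a block $2\times b$ with $b\ge4$ must lie with its side of length $2$ across the height $3$, so every vertical line meets at most one such block. Hence their total width is at most $n$ and their total area at most $2n$, so a multiset of such blocks with area $3n-r$ can never be tiled. You therefore cannot pass from $q$ to the count $q'$ of fitting multisets this way. Proving $\log q'(mn)=\pi\sqrt{2mH_mn/3}+O(\log n)$ would need exactly the missing feasibility argument plus a concentration estimate around the saddle point.

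The missing idea is how to reach the constant $mH_m$ constructively.
\begin{itemize}
\item Allot to the blocks of height $a$ total width $T_a\approx mn/(H_ma^2)$, i.e.\ area proportional to $1/a$. This maximises $\sum_a\sqrt{A_a/a}$ under $\sum_aA_a=mn$ and gives $\sum_a\sqrt{T_a}=\sqrt{mH_mn}$, matching the upper bound; it is also where the saddle point of $Q$ puts its mass.
\item For $a\ge2$, pack these blocks greedily into a column of $q_a$ strips of height $a$ and width about $T_a/q_a$. The columns fit side by side precisely because $m\sum_{a=2}^m 1/(a^2q_a)<H_m$ (Lemma~\ref{lm: harmonic}).
\item Crucially, the leftover height $m-aq_a$ is not wasted on a filler block, and the $1\times b$ blocks do not get a column of their own. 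All uncovered area (the tops of the columns, the unused ends of the strips, and the region to the right) is cut into at most $m^2$ unit-height strips of total width at least $mn/H_m\ge T_1$, and the $1\times b$ blocks are packed there.
\item Capping all parts at $\nu\approx2\sqrt{T}\log T$ makes each greedy packing lose only $O(\sqrt n\log n)$ width. This costs only $O(\log n)$ in the exponent, because partitions of $t$ with parts at most $\mu(t)$ are at least half of all partitions of $t$ (Lemma~\ref{lm: largest part}).
\item Using parts $\ge a$ for height $a$ and parts $\ge2$ for height $1$ makes the final multiset determine every $\lambda_a$.
\end{itemize}
This product construction gives the $O(\log n)$ lower bound directly, with no restricted saddle-point analysis.
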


For the reader's convenience, we outline the idea of the proof of Theorem \ref{thm: main}. It is divided into two separate parts, concerning the derivation of the upper and lower bounds for $p(m,n)$.\\
\indent The proof of the upper bound compares $p(m,n)$ with the number of multisets of blocks of total area $mn$ whose smaller sides do not exceed $m$ (there is no requirement that such a multiset of blocks has to form a tiling). The generating function of the latter quantity factors into shifted Euler products, and an elementary estimate of these products provides the upper bound without any error term.\\
\indent The proof of the lower bound is via a construction. For every
$a\in\{1,2,\ldots,m\}$, we independently choose an integer partition
$\lambda_a$ from a suitable family and associate with each of its parts $b$
a block of size $a\times b$. Thus, the index $a$ specifies the height of
the corresponding blocks, while their widths are the parts of $\lambda_a$.
For every $a\ge 2$, we first reserve a column of height $m$ containing
$\lfloor m/a\rfloor$ horizontal strips of height $a$, completed by strips
of height $1$ to the full height $m$. These columns are placed side by side,
and the blocks associated with $\lambda_a$ are packed into the corresponding
strips of height $a$. The~unused parts of these strips, the strips of height
$1$ already present in the columns, and the part of the rectangle remaining
to the right of all the columns are then divided into horizontal strips of
height $1$. The blocks associated with $\lambda_1$ are packed into these
strips, and every remaining unit of area is filled with a square
$1\times1$. The widths of the columns are estimated by the elementary
inequality
$
m\sum_{a=2}^{m}\frac{1}{a^2\lfloor m/a\rfloor}<H_m,
$
proved in Lemma~\ref{lm: harmonic}, which guarantees that all the columns
fit side by side inside the rectangle $m\times n$. The~multiset of the
selected blocks determines the partitions
$\lambda_1,\lambda_2,\ldots,\lambda_m$ uniquely, so the number of
constructed partitions of the rectangle is the product of the cardinalities
of the families, and this product has the required order of magnitude.\\
The paper is self-contained. Section~\ref{sec: preliminaries} collects some notation and definitions, Section~\ref{sec: lemmas} contains the auxiliary lemmas, and Section~\ref{sec: proof} is devoted to the proof of Theorem~\ref{thm: main}.

\section{Preliminaries}\label{sec: preliminaries}

We write $\mathbb{Z}^+$ and $\mathbb{R}$ for the set of positive integers and the set of real numbers, respectively. Throughout the paper, $m$ is a fixed positive integer, and the implied constants in the big $O$ notation may depend on $m$. A partition of a non-negative integer $t$ is a non-increasing sequence $\lambda=(\lambda_1,\lambda_2,\ldots,\lambda_r)$ of positive integers such that
$
|\lambda|:=\lambda_1+\lambda_2+\cdots+\lambda_r=t.
$
The elements $\lambda_i$ are called parts of the partition $\lambda$. The number of all partitions of $t$ is denoted by $p(t)$. In particular, we have that $p(t)=0$ if $t<0$ and $p(0)=1$ with the empty partition. Clearly, the map that adds a part equal to $1$ at the end of a partition is an injection from the set of partitions of $t-1$ into the set of partitions of $t$, so the sequence $p(t)$ is weakly increasing. Further, for a positive integer $\mu$, let $p_{\le\mu}(t)$ denote the number of partitions of $t$ whose parts are at most $\mu$. We also set
$
\mu(t):=\left\lceil2\sqrt{t}\log t\right\rceil
$
for every integer $t\geq2$. Let us notice that the Hardy--Ramanujan formula \cite{HardyRamanujan},
$$
p(t)=\frac{1}{4\sqrt3\,t}\exp\left(\pi\sqrt{\tfrac{2t}{3}}\right)\left(1+O\left(t^{-1/2}\right)\right),
$$
ensures that
\begin{equation}\label{eq: HR}
\log p(t)=\pi\sqrt{\tfrac{2t}{3}}-\log t+O(1).
\end{equation}

We now recall the definition of a partition of a rectangle from \cite{GVZ}. As it was indicated earlier, all rectangles are assumed to have sides parallel to the coordinate axes.

\begin{df}\label{df: rectangle partition}
Let $R=[0,m]\times[0,n]\subset\mathbb{R}^2$, where $m,n\in\mathbb{Z}^+$. A~partition of the rectangle $R$ is a~finite multiset $\mathcal{P}=\{R_i\}_{i\in I}$ of integer-sided rectangles such that:
\begin{enumerate}
\item rectangles of sizes $a\times b$ and $b\times a$ are indistinguishable;
\item all the rectangles from $\mathcal{P}$ can be arranged in such a~way that they cover exactly the area of the rectangle $R$ and their interiors do not intersect.
\end{enumerate}
Two partitions of a rectangle are considered to be the same if and only if they are equal as multisets. The number of partitions of a rectangle of size $m\times n$ is denoted by $p(m,n)$.
\end{df}

We say that a rectangle of size $a\times b$ with $a\le b$ is written in the canonical form. If $n\geq m$, then, in any arrangement from Definition~\ref{df: rectangle partition}, both sides of every rectangle from $\mathcal P$ are at most $n$ and one of them is at most $m$. Hence, the smaller side of every rectangle from $\mathcal P$ is always at most $m$ in such a setting.

\section{Lemmata}\label{sec: lemmas}

The first lemma contains elementary estimates of finite sums of reciprocals of positive integers.

\begin{lm}\label{lm: sums}
For all $x\in(0,1)$, we have $\log(1+x)<x<-\log(1-x)$. Moreover, for all integers $1\le u<v$,
$$
\log\frac{v+1}{u+1}
<
\sum_{j=u+1}^{v}\frac1j
<
\log\frac{v}{u}
\qquad\text{and}\qquad
\sum_{j=u+1}^{v}\frac{1}{j^2}
<
\frac{2}{2u+1}-\frac{2}{2v+1}.
$$
\end{lm}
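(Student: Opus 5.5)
The plan is to derive all three claims from the elementary inequality $\log(1+x)<x$ and its companion, by comparing each sum with an integral or a telescoping sum.

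\textbf{First claim.} For $x\in(0,1)$, set $f(x)=x-\log(1+x)$. Then $f(0)=0$ and $f'(x)=1-\frac{1}{1+x}=\frac{x}{1+x}>0$ for $x>0$, so $\log(1+x)<x$. Similarly, $g(x)=-\log(1-x)-x$ satisfies $g(0)=0$ and $g'(x)=\frac{1}{1-x}-1>0$, so $x<-\log(1-x)$. Alternatively, one can use the strict concavity of $\log$ and the tangent line at $1$, applied at the points $1+x$ and $1-x$.

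\textbf{Harmonic sum.} Apply the first claim with $x=1/j$ for each $j\ge 2$; here $j\ge u+1\ge 2$, so $1/j\in(0,1)$. The left inequality gives $\log\frac{j+1}{j}=\log(1+1/j)<1/j$. Summing over $j=u+1,\dots,v$ telescopes to $\log\frac{v+1}{u+1}<\sum_{j=u+1}^{v}\frac1j$. The right inequality gives $1/j<-\log(1-1/j)=\log\frac{j}{j-1}$. Summing telescopes to $\log\frac{v}{u}$. Strictness holds because $v>u$ means the sum has at least one term.

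\textbf{Sum of reciprocal squares.} Use the telescoping bound
$$\frac{1}{j^2}<\frac{1}{j^2-\tfrac14}=\frac{1}{(j-\tfrac12)(j+\tfrac12)}=\frac{2}{2j-1}-\frac{2}{2j+1}.$$
Summing from $j=u+1$ to $v$ gives exactly $\frac{2}{2u+1}-\frac{2}{2v+1}$, with strict inequality since there is at least one term.

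\textbf{Main obstacle.} None of these steps is hard. The only point requiring care is spotting the shifted telescoping identity $\frac{1}{j^2-1/4}$ in the last claim, and checking that $x=1/j$ stays in $(0,1)$. That is guaranteed by $u\ge1$, which forces $j\ge2$.
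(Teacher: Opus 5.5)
Your proposal is correct and follows the paper's proof essentially verbatim: the derivative argument for $\log(1+x)<x<-\log(1-x)$, the substitution $x=1/j$ with telescoping for the harmonic bounds, and the comparison $\frac{1}{j^2}<\frac{1}{j^2-1/4}=\frac{2}{2j-1}-\frac{2}{2j+1}$ with telescoping for the squares. Your explicit checks that $1/j\in(0,1)$ because $j\ge u+1\ge 2$, and that strictness survives summation because the sum is nonempty, are small points the paper leaves implicit.
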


\begin{proof}
For the first part, it suffices to consider the functions $x-\log(1+x)$ and $-\log(1-x)-x$, and their derivatives. To deduce the second one, we apply both inequalities with $x=\tfrac1j$ to get
$$
\log\frac{j+1}{j}<\frac1j<\log\frac{j}{j-1}
$$
for every integer $j\geq2$. Summing over $j\in\{u+1,u+2,\ldots,v\}$ and telescoping gives the required estimates. Finally, for every integer $j\geq1$, we have that
$$
\frac{1}{j^2}<\frac{1}{j^2-\tfrac14}=\frac{2}{2j-1}-\frac{2}{2j+1}.
$$
Summing over $j\in\{u+1,u+2,\ldots,v\}$ and telescoping completes the proof.
\end{proof}

The second lemma presents an upper bound for some infinite series of logarithms.

\begin{lm}\label{lm: euler product}
For every real $u>0$, we have
$$
\sum_{j=1}^{\infty}\log\frac{1}{1-e^{-ju}}
\le
\frac{\pi^2}{6u}.
$$
\end{lm}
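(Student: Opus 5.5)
Expand each logarithm into its power series and interchange the order of summation; every term is nonnegative, so this is legitimate. For $u>0$ and $j\ge1$ we have $0<e^{-ju}<1$, and therefore
$$
\log\frac{1}{1-e^{-ju}}=\sum_{k=1}^{\infty}\frac{e^{-jku}}{k}.
$$
Summing over $j$ and applying Tonelli's theorem gives
$$
\sum_{j=1}^{\infty}\log\frac{1}{1-e^{-ju}}
=\sum_{k=1}^{\infty}\frac1k\sum_{j=1}^{\infty}e^{-jku}
=\sum_{k=1}^{\infty}\frac1k\cdot\frac{1}{e^{ku}-1}.
$$

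The key step is the elementary inequality $e^{x}-1\ge x$ for $x>0$, which follows from convexity or from the first part of Lemma~\ref{lm: sums}. It yields $\frac{1}{e^{ku}-1}\le\frac{1}{ku}$. Hence the double sum is at most
$$
\sum_{k=1}^{\infty}\frac{1}{k^2u}=\frac{\zeta(2)}{u}=\frac{\pi^2}{6u},
$$
which is exactly the claimed bound.

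There is no real obstacle here. The only point needing justification is the interchange of summations, and it holds because all terms are nonnegative. One may also note that the inequality is strict for every $u>0$, since $e^{x}-1>x$ when $x>0$.

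If one prefers to avoid quoting $\zeta(2)=\pi^2/6$, the same computation at least gives the bound $\frac{1}{u}\sum_{k\ge1}k^{-2}$. The identification $\sum_{k\ge1}k^{-2}=\pi^2/6$ is the classical Euler result, and I would simply cite it.
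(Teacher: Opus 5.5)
Your proof is correct and matches the paper's argument: expand the logarithm, swap the sums by nonnegativity, sum the geometric series to get $\sum_{k\ge1}\frac{1}{k(e^{ku}-1)}$, bound each term by $\frac{1}{k^2u}$ using $e^x>1+x$, and finish with $\sum_{k\ge1}k^{-2}=\pi^2/6$. One small caveat: the first part of Lemma~\ref{lm: sums} is stated only for $x\in(0,1)$, so it does not give $e^{x}-1\ge x$ when $ku\ge1$, and your convexity justification is the one that covers all $x>0$.
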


\begin{proof}
All the terms below are positive, so the order of summation can be interchanged. Expanding the logarithm into its Maclaurin series and summing the resulting geometric series gives us that
$$
\sum_{j=1}^{\infty}\log\frac{1}{1-e^{-ju}}
=
\sum_{j=1}^{\infty}\sum_{k=1}^{\infty}\frac{e^{-jku}}{k}
=
\sum_{k=1}^{\infty}\frac1k\cdot\frac{e^{-ku}}{1-e^{-ku}}
=
\sum_{k=1}^{\infty}\frac{1}{k\left(e^{ku}-1\right)}.
$$
Since $e^x>1+x$ for $x>0$, every term is smaller than $\tfrac{1}{k^2u}$. The claim now follows from the well-known Euler's identity $\sum_{k=1}^{\infty}\tfrac{1}{k^2}=\tfrac{\pi^2}{6}$.
\end{proof}

The third lemma estimates  $p_{\le\mu(t)}(t)$ in terms of the partition function $p(t)$.

\begin{lm}\label{lm: largest part}
We have $p_{\le\mu(t)}(t)\geq\tfrac12p(t)$ for all sufficiently large $t$.
\end{lm}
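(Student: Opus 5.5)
We bound the complement. Every partition of $t$ whose largest part exceeds $\mu=\mu(t)$ has a part of size $k$ for some integer $k\in\{\mu+1,\ldots,t\}$. Deleting one such part leaves a partition of $t-k$. Hence
$$
p(t)-p_{\le\mu}(t)\le\sum_{k=\mu+1}^{t}p(t-k)\le t\,p(t-\mu-1),
$$
where the second inequality uses that $p$ is weakly increasing, as noted in Section~\ref{sec: preliminaries}. It therefore suffices to show $t\,p(t-\mu-1)\le\tfrac12 p(t)$ for large $t$, i.e.
$$
\log p(t)-\log p(t-\mu-1)\ge \log t+\log 2 .
$$

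To prove this we apply \eqref{eq: HR} to both $t$ and $s:=t-\mu-1$. Since $\mu\sim 2\sqrt t\log t=o(t)$, we have $s\ge t/2$ for large $t$, so $\log t-\log s=O(1)$. This gives
$$
\log p(t)-\log p(s)=\pi\sqrt{\tfrac23}\bigl(\sqrt t-\sqrt s\bigr)+O(1).
$$
Next,
$$
\sqrt t-\sqrt s=\frac{\mu+1}{\sqrt t+\sqrt s}\ge\frac{\mu}{2\sqrt t}\ge\log t .
$$
Hence the difference is at least $\pi\sqrt{2/3}\,\log t-O(1)$. Because $\pi\sqrt{2/3}\approx2.565>1$, this exceeds $\log t+\log 2$ for all sufficiently large $t$, and therefore $p(t)-p_{\le\mu}(t)\le\tfrac12p(t)$, as required.

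The only delicate point is keeping track of the constants. The factor $t$ from summing over $k$ costs $\log t$. The choice $\mu\approx2\sqrt t\log t$ produces a gain of $\pi\sqrt{2/3}\log t$, which beats that loss with room to spare. The $O(1)$ terms coming from \eqref{eq: HR} and from $\log(t/s)$ are then absorbed once $t$ is large enough.
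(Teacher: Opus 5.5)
Your proof is correct and follows essentially the same route as the paper. You bound the complement by $\sum_{k>\mu}p(t-k)\le t\,p(t-\mu-1)$ using part deletion and monotonicity, then apply \eqref{eq: HR} with $\sqrt t-\sqrt s\ge\mu/(2\sqrt t)\ge\log t$ so that the gain $\pi\sqrt{2/3}\,\log t$ beats the loss $\log t$; the only cosmetic differences are that you use $p(t-\mu-1)$ in place of $p(t-\mu(t))$ and you observe that $\pi\sqrt{2/3}>1$ already suffices.
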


\begin{proof}
Let $E(t)$ denote the number of partitions of $t$ with at least one part greater than $\mu(t)$. Removing one copy of the largest part $b$ is, for every fixed $b$, an injection into the set of partitions of $t-b$. Since the sequence $p$ is weakly increasing, we have that
$$
E(t)\le\sum_{b=\mu(t)+1}^{t}p(t-b)\le t\,p\bigl(t-\mu(t)\bigr).
$$
For all sufficiently large $t$, we have that $\mu(t)\le\tfrac t2$,
$$\log\tfrac{t}{t-\mu(t)}
\le
\log2\qquad\text{and}\qquad
\sqrt{t}-\sqrt{t-\mu(t)}
=
\tfrac{\mu(t)}{\sqrt t+\sqrt{t-\mu(t)}}
\geq
\tfrac{\mu(t)}{2\sqrt t}
\geq
\log t.
$$
Therefore, for all large values of $t$, Equality \eqref{eq: HR} implies that
\begin{align*}
\log\frac{t\,p(t-\mu(t))}{p(t)}
&=
\log t+\log\frac{t}{t-\mu(t)}
-\pi\sqrt{\tfrac23}\left(\sqrt t-\sqrt{t-\mu(t)}\right)+O(1)\\
&\le
-\left(\pi\sqrt{\tfrac23}-1\right)\log t+O(1).
\end{align*}
Since $\pi\sqrt{2/3}>2$, the right-hand side tends to $-\infty$. In particular, $E(t)\le\tfrac12p(t)$ for all sufficiently large $t$. The equality $p_{\le\mu(t)}(t)=p(t)-E(t)$ concludes the proof.
\end{proof}

To deal with the lower bound for $p(m,n)$, we also need to introduce a special partition statistic together with its fundamental properties.

\begin{lm}\label{lm: families}
Let $A\geq2$ and $K\geq1$ be fixed integers. For a sufficiently large positive integer $\ell$, define
$$
\sigma_\ell=\left\lfloor\ell-(2K+1)\sqrt\ell\log\ell\right\rfloor,
\qquad
\nu_\ell=\mu(\sigma_\ell)=\left\lceil2\sqrt{\sigma_\ell}\log\sigma_\ell\right\rceil,
$$
and let $\mathcal A(A,\ell)$ be the set of all partitions $\lambda$ such that $|\lambda|\le\sigma_\ell$ and every part of $\lambda$ belongs to the set $\{A,A+1,\ldots,\nu_\ell\}$. Then
$$
\sigma_\ell\le\ell-K\nu_\ell
$$
for all sufficiently large $\ell$, and
$$
\log|\mathcal A(A,\ell)|
=
\pi\sqrt{\tfrac23}\sqrt\ell+O(\log\ell),
$$
where the implied constant may depend on $A$ and $K$.
\end{lm}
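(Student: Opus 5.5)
The plan has two parts: the inequality $\sigma_\ell\le\ell-K\nu_\ell$, and then matching upper and lower bounds for $|\mathcal A(A,\ell)|$.

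\textbf{The inequality $\sigma_\ell\le\ell-K\nu_\ell$.} Since $\sigma_\ell\le\ell$, we have $\nu_\ell\le 2\sqrt\ell\log\ell+1$. Hence
$$
\ell-K\nu_\ell\ge \ell-2K\sqrt\ell\log\ell-K.
$$
On the other hand, $\sigma_\ell\le \ell-(2K+1)\sqrt\ell\log\ell$. The difference between the two bounds is at least $\sqrt\ell\log\ell-K$, which is positive for large $\ell$.

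\textbf{Upper bound for $|\mathcal A(A,\ell)|$.} Every $\lambda\in\mathcal A(A,\ell)$ is a partition of some $t\le\sigma_\ell\le\ell$. Since $p$ is weakly increasing,
$$
|\mathcal A(A,\ell)|\le\sum_{t=0}^{\ell}p(t)\le(\ell+1)p(\ell).
$$
By \eqref{eq: HR} this gives $\log|\mathcal A(A,\ell)|\le\pi\sqrt{2/3}\sqrt\ell+O(\log\ell)$.

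\textbf{Lower bound for $|\mathcal A(A,\ell)|$.} We need partitions whose parts all lie in $[A,\nu_\ell]$. I would take an injection from partitions of $s:=\sigma_\ell-c$ with parts at most $\mu(s)$, for a suitable shift $c$, into $\mathcal A(A,\ell)$. The map adds $A-1$ to every part. A partition of $s$ with $r$ parts then becomes one of size $s+(A-1)r$, with parts in $[A,\mu(s)+A-1]$. Two issues arise.

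\emph{First issue: the number of parts $r$ is not bounded.} I would instead use the conjugate-free trick of restricting to partitions of $s$ with at most $\mu(s)$ parts and all parts at most $\mu(s)$. By conjugation symmetry and Lemma~\ref{lm: largest part}, the partitions with more than $\mu(s)$ parts number at most $\tfrac12 p(s)$ as well. The argument there actually gives $o(p(s))$ for each of the two exceptional sets. So at least $p(s)(1-o(1))$ partitions satisfy both constraints.

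\emph{Second issue: the parts must stay at most $\nu_\ell$.} I would avoid the shift in part size altogether by a cleaner alternative. Replace each part $b$ by $b+A-1$ only if needed; more simply, take partitions of $s$ with all parts at most $\mu(s)-A$ and at most $\mu(s)$ parts. Choose $s=\sigma_\ell-(A-1)\mu(\sigma_\ell)$, so that after shifting, the size is at most $s+(A-1)\mu(s)\le\sigma_\ell$ and the parts are at most $\mu(s)\le\nu_\ell$, using monotonicity of $\mu$.

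The bound on the largest part is $\mu(s)-A$ rather than $\mu(s)$. This is handled by redoing the estimate in the proof of Lemma~\ref{lm: largest part} with $\mu(t)-A$ in place of $\mu(t)$: the bound $\sqrt t-\sqrt{t-\mu}\ge\log t-O(1/\sqrt t)$ still holds, so the exceptional count is again $o(p(s))$. The map is injective, because subtracting $A-1$ from each part recovers the original partition.

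\textbf{Conclusion and main obstacle.} The resulting bound is
$$
|\mathcal A(A,\ell)|\ge \tfrac14p(s)\quad\text{with}\quad s=\ell-O(\sqrt\ell\log\ell).
$$
By \eqref{eq: HR}, $\log p(s)=\pi\sqrt{2/3}\sqrt s+O(\log s)$. Since $\sqrt\ell-\sqrt s=O(\log\ell)$, we get $\log p(s)=\pi\sqrt{2/3}\sqrt\ell+O(\log\ell)$, which matches the upper bound.

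The main obstacle is bookkeeping in the lower bound: controlling the number of parts (via conjugation) and checking that the shift costs only $O(\sqrt\ell\log\ell)$ in size. That cost is exactly what keeps the error at $O(\log\ell)$.
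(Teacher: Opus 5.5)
Your proof is correct. The inequality $\sigma_\ell\le\ell-K\nu_\ell$ and the upper bound match the paper's argument; the paper bounds by $(\sigma_\ell+1)p(\sigma_\ell)$ instead of $(\ell+1)p(\ell)$, which makes no difference. The lower bound, however, follows a different route.

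The paper does not construct an injection. It takes the at least $\tfrac12p(\sigma_\ell)$ partitions of $\sigma_\ell$ with parts at most $\nu_\ell$, which Lemma~\ref{lm: largest part} supplies exactly as stated. It then deletes all parts smaller than $A$. Each resulting $\tilde\lambda\in\mathcal A(A,\ell)$ has at most $(\sigma_\ell+1)^{A-1}$ preimages, since the deleted parts are determined by the multiplicities of $1,\dots,A-1$. This polynomial loss is absorbed in $O(\log\ell)$. Neither the number of parts nor a modified threshold on the largest part ever has to be controlled.

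Your shift $b\mapsto b+A-1$ is genuinely injective and loses only a constant factor in the count. The price is elsewhere:
\begin{itemize}
\item You pass to the smaller argument $s=\sigma_\ell-(A-1)\nu_\ell$. This costs about $\pi\sqrt{2/3}\,(A-1)\log\ell$ in the exponent, the same order as the paper's $(A-1)\log(\sigma_\ell+1)$.
\item You must reopen the proof of Lemma~\ref{lm: largest part}. You need its $o(p(s))$ version rather than the stated $\tfrac12$ bound, applied twice: once with threshold $\mu(s)-A$, and once, via conjugation, to the number of parts.
\item You also need the monotonicity of $\mu$.
\end{itemize}
All of these steps check out, so both arguments reach the same $O(\log\ell)$ precision. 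The paper's version is shorter because it uses the auxiliary lemma as a black box; yours gives an explicit injection.

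When you write it up, drop the exploratory sentences, such as ``only if needed; more simply\dots'' and ``conjugate-free trick'' (it is in fact a conjugation argument). State the final choice of $s$ and the two exceptional sets directly.
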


\begin{proof}
We first note that
$
\sigma_\ell\sim\ell,
\,\,
\nu_\ell\sim2\sqrt{\ell}\log\ell.
$
Since $\sigma_\ell\le\ell$, we have $\nu_\ell\le2\sqrt\ell\log\ell+1$. Hence,
$$
K\nu_\ell\le2K\sqrt\ell\log\ell+K\le(2K+1)\sqrt\ell\log\ell
$$
for all sufficiently large $\ell$. Moreover, $\sigma_\ell\le\ell-(2K+1)\sqrt\ell\log\ell$ by the definition of $\sigma_\ell$. 
Adding the above inequalities side by side gives $\sigma_\ell+K\nu_\ell\le\ell$, and, in consequence, $\sigma_\ell\le\ell-K\nu_\ell$ for all sufficiently large $\ell$.

We now estimate $|\mathcal A(A,\ell)|$ from below. Consider the set of all partitions of $\sigma_\ell$ whose parts are at most $\nu_\ell$. Lemma~\ref{lm: largest part} guarantees that its cardinality is $p_{\le\nu_\ell}(\sigma_\ell)\geq\tfrac12p(\sigma_\ell)$ for all sufficiently large $\ell$. Let $\lambda$ be an arbitrary element of that set of partitions. If we delete all the parts smaller than $A$ from $\lambda$, then we get a partition $\tilde{\lambda}$ belonging to $\mathcal A(A,\ell)$.
The~deleted parts form a partition of $\sigma_\ell-|\tilde{\lambda}|$ into parts smaller than $A$. Such a~partition is determined by the multiplicities of the parts $1,2,\ldots,A-1$, and each of these multiplicities is at most $\sigma_\ell$. Therefore, for each fixed $\tilde{\lambda}\in\mathcal A(A,\ell)$, there are at most $(\sigma_\ell+1)^{A-1}$ partitions $\lambda$ that can produce $\tilde{\lambda}$ after deleting all parts smaller than $A$. Hence,
\begin{align}\label{in: A(A,l)>}
|\mathcal A(A,\ell)|
\ge
\frac{p_{\le\nu_\ell}(\sigma_\ell)}{(\sigma_\ell+1)^{A-1}}
\ge
\frac{p(\sigma_\ell)}{2(\sigma_\ell+1)^{A-1}}
\end{align}
for all large values of $\ell$. On the other hand, since the partition function $p(n)$ is weakly increasing,
\begin{align}\label{in: A(A,l)<}
|\mathcal A(A,\ell)|
\le
\sum_{j=0}^{\sigma_\ell}p(j)
\le
(\sigma_\ell+1)p(\sigma_\ell).
\end{align}
Consequently, Inequalities \eqref{in: A(A,l)>} and \eqref{in: A(A,l)<}  point out that $\log|\mathcal A(A,\ell)|=\log p(\sigma_\ell)+O\left(\log\left(\sigma_\ell+1\right)\right)$. Since $\ell-\sigma_\ell\le(2K+1)\sqrt\ell\log\ell+1$, we have
$$
\sqrt\ell-\sqrt{\sigma_\ell}
=
\frac{\ell-\sigma_\ell}{\sqrt\ell+\sqrt{\sigma_\ell}}
=
O(\log\ell),
$$
so $\sqrt{\sigma_\ell}=\sqrt\ell+O(\log\ell)$. Therefore, we obtain that $\log|\mathcal A(A,\ell)|=\log p(\sigma_\ell)+O(\log\ell)$. The claim now follows from Equality \eqref{eq: HR}.
\end{proof}
\begin{re}
In Lemma~\ref{lm: families}, the implied constant in the term $O(\log\ell)$ may depend on $A$ and $K$. In the proof of Theorem~\ref{thm: main}, we only use $K=m^2$ and $A\in\{2,3,\ldots,m\}$, where $m$ is fixed. Hence, the resulting implied constant depends only on $m$.
\end{re}
The succeeding lemma is the packing argument used in the construction in the proof of Theorem \ref{thm: main}.

\begin{lm}\label{lm: strips}
Let $k$, $\mu$ and $s$ be positive integers, and let $S_1,S_2,\ldots,S_k$ be pairwise disjoint horizontal strips of common height $s$ and integer widths $c_1,c_2,\ldots,c_k$ such that $c_j\geq\mu$ for every $j\in\{1,2,\ldots,k\}$. Each multiset of integer-sided rectangles of height $s$, whose widths are at most $\mu$ and whose total width is at most
$$
c_1+c_2+\cdots+c_k-k\mu,
$$
can be packed into $S_1\cup S_2\cup\cdots\cup S_k$ in such a way that the interiors of the rectangles do not intersect. Moreover, the packing can be chosen so that, in every strip, the rectangles are placed consecutively, starting from the left side of the strip. In particular, the unused part of every strip is a rectangle of height $s$.
\end{lm}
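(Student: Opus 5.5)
We will use a greedy "first fit" packing, the one-dimensional bin-packing argument in which every strip is a bin of capacity $c_j$ and every rectangle an item of size equal to its width. Since all rectangles have height $s$, equal to the height of the strips, each rectangle placed in a strip $S_j$ occupies the full height of $S_j$. Hence the packing problem reduces entirely to the horizontal direction: rectangles placed consecutively from the left side of a strip have disjoint interiors exactly when their total width does not exceed the strip width.

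The procedure is as follows. List the rectangles in any order, say with widths $w_1,\dots,w_N$, where $w_i\le\mu$ and $\sum_i w_i\le c_1+\cdots+c_k-k\mu$. Process the strips in the order $S_1,S_2,\ldots$. Into the current strip $S_j$, place consecutive rectangles of the list, starting from its left side and each adjacent to the previous one, as long as the next rectangle still fits, i.e.\ the accumulated width stays at most $c_j$. When the next rectangle does not fit, close $S_j$ and move on to $S_{j+1}$. By construction each strip contains rectangles placed consecutively from its left side, with disjoint interiors. The unused part of each strip is then the rectangle of height $s$ to the right of the last placed block, or the whole strip if it received nothing.

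The only thing to verify, and the main point, is that the procedure never runs out of strips. Suppose, for contradiction, that some rectangle remains unplaced after $S_k$ has been closed. Every strip $S_j$ was then closed because the next rectangle, of width at most $\mu$, did not fit. Hence the width $f_j$ filled in $S_j$ satisfies $f_j+\mu\ge f_j+w>c_j$, that is, $f_j>c_j-\mu$. This also covers strips that received no rectangle, since there $0>c_j-\mu$ would contradict $c_j\ge\mu$, so such a strip could not have been closed. In fact the hypothesis $c_j\ge\mu$ guarantees that every rectangle fits into an empty strip, so the procedure is well defined. Summing over $j$, the placed rectangles have total width
$$\sum_{j=1}^k f_j>\sum_{j=1}^k c_j-k\mu\ge\sum_i w_i .$$
This is impossible, since the placed rectangles form only part of the multiset while at least one rectangle remains unplaced. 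Therefore all rectangles are packed, which proves the lemma.
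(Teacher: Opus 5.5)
Your proposal is correct and follows essentially the same route as the paper. You place the rectangles greedily (next-fit) through $S_1,\dots,S_k$, observe that every abandoned strip has occupied width exceeding $c_j-\mu$, and conclude that running out of strips would force the placed total to exceed $c_1+\cdots+c_k-k\mu$, contradicting the hypothesis; your explicit remark that $c_j\ge\mu$ prevents an empty strip from being abandoned is a point the paper uses only implicitly.
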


\begin{proof}
Order the rectangles arbitrarily and place them greedily. We proceed through the strips $S_1,S_2,\ldots,S_k$ one by one. In the current strip, the rectangles are placed consecutively, each one adjacent to the previously placed one, and the first one adjacent to the left side of the strip. When the next rectangle does not fit into the current strip, we pass to the next strip and never return. Since the width of every rectangle is at most $\mu\le c_j$, a strip $S_j$ is abandoned only when its occupied width exceeds $c_j-\mu$.

Suppose that some rectangle cannot be placed after all of the $k$ strips have been used. Then the occupied width of $S_j$ exceeds $c_j-\mu$ for every $j\in\{1,2,\ldots,k\}$, so the total width of the placed rectangles exceeds $c_1+c_2+\cdots+c_k-k\mu$, which contradicts the assumption on the total width. Hence, all the rectangles are placed, and the packing has the required form.
\end{proof}

For the construction in the proof of Theorem \ref{thm: main}, we also need the following lemma, which was mentioned in Introduction.

\begin{lm}\label{lm: harmonic}
For an integer $m\geq2$ and $2\le a\le m$, put $q_a=\lfloor m/a\rfloor$ and $r_a=m-aq_a$. Then
$$
m\sum_{a=2}^{m}\frac{1}{a^2q_a}<H_m.
$$
\end{lm}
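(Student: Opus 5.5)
The plan is to peel off the $1/a$ part of each summand, so that $H_m$ is compared with the sum termwise and only a bounded error remains. Since $m=aq_a+r_a$, we have
$$
\frac{m}{a^2q_a}=\frac1a+\frac{r_a}{a^2q_a},
$$
and hence
$$
m\sum_{a=2}^m\frac{1}{a^2q_a}=(H_m-1)+\sum_{a=2}^m\frac{r_a}{a^2q_a}.
$$
The lemma is therefore equivalent to the bound
$$
E_m:=\sum_{a=2}^m\frac{r_a}{a^2q_a}<1.
$$
A crude bound such as $r_a\le a-1$ with $aq_a\ge m-a+1$ is not enough. It leads to a sum of size about $\tfrac12+\log 2$ and the like, which is too large. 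So I would group the terms by the value of $q_a$.

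For a fixed $q\ge1$, let $I_q=\{a\in\mathbb Z:\ m/(q+1)<a\le m/q,\ a\ge2\}$. On $I_q$ we have $q_a=q$ and $r_a=m-qa$, so the contribution of the block is
$$
\sum_{a\in I_q}\frac{m-qa}{qa^2}=\frac mq\sum_{a\in I_q}\frac1{a^2}-\sum_{a\in I_q}\frac1a.
$$
Heuristically, replacing the sums by integrals gives $\tfrac1q\bigl(1-q\log(1+\tfrac1q)\bigr)\le\tfrac1{2q^2}$. Summing over $q$ would then give about $\pi^2/12\approx0.82<1$, which leaves a margin of roughly $0.18$.

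To make this rigorous, I would write $I_q=\{u+1,\dots,v\}$ with $u=\lfloor m/(q+1)\rfloor$ and $v=\lfloor m/q\rfloor$, and use Lemma~\ref{lm: sums}. The upper bound for $\sum 1/j^2$ by $\tfrac2{2u+1}-\tfrac2{2v+1}$ handles the first sum, and the lower bound $\log\tfrac{v+1}{u+1}$ handles the harmonic part. The result should be that each block is at most $\tfrac1q\bigl(1-q\log(1+\tfrac1q)\bigr)$ plus an error of order $q/m$, or $1/m$ per block. Since $I_q$ is nonempty only for $q\le m/2$, I would not sum these errors over all $q$. Instead I would treat only the blocks with $q\le Q$, for a fixed cutoff $Q$ such as $Q=\sqrt m$, in this way. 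For the remaining small $a$ (namely $a\le m/Q$), I would use the trivial bound
$$
\frac{r_a}{a^2q_a}<\frac{a}{a^2(m/a-1)}=\frac1{m-a},
$$
whose total is at most about $\tfrac{1}{Q}$, which is small.

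This approach yields $E_m\le\sum_{q\ge1}\tfrac1{2q^2}+O(m^{-1/2})$ with explicit constants, and hence $E_m<1$ for all $m\ge m_0$ with $m_0$ explicit. The finitely many remaining cases $2\le m<m_0$ would be verified by direct computation of $E_m$; for example, $E_2=0$, $E_3=\tfrac19$ and $E_4=\tfrac1{9}+\tfrac1{16}$.

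The main obstacle is the bookkeeping of the discretization errors at the block endpoints. These come from the floors $u$ and $v$ and from the ceiling effect at $a=\lfloor m/q\rfloor$. They have to be controlled so that the explicit $m_0$ is small enough for the finite check to be feasible. If the constants turn out poorly, I would first try sharpening $\tfrac1{2q^2}$ to the exact value $\tfrac1q-\log(1+\tfrac1q)$, whose sum over $q\ge1$ is $1-\gamma\approx0.42$, where $\gamma$ is the Euler--Mascheroni constant. This gives a much larger margin of about $0.58$ for the error terms.
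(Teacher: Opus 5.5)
Your reduction to $E_m=\sum_{a=2}^m r_a/(a^2q_a)<1$ is exactly the paper's first step. After that, the paper gets away with much less than you feared. It uses your trivial bound $r_a/(a^2q_a)<1/(m-a)$ only for $2\le a\le h=\lfloor m/2\rfloor$, where Lemma~\ref{lm: sums} bounds the total by $\log 2$. It then evaluates the single block $q_a=1$ ($a>h$) exactly as $m\sum_{a>h}a^{-2}-\sum_{a>h}a^{-1}$, again via Lemma~\ref{lm: sums}. The main terms add up to exactly $\log 2+(1-\log 2)=1$, but the explicit endpoint terms in Lemma~\ref{lm: sums} come out on the favorable side. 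For $m=2h$ one gets $E_m<1-\frac{6h+1}{(2h+1)(4h+1)}+\log\frac{2h+2}{2h+1}<1$, and $m=2h+1$ is similar. Hence $E_m<1$ for every $m\ge4$, while $E_2=0$ and $E_3=\frac14$. In your terms, this is your scheme with only the block $q=1$ treated exactly. It needs no asymptotics and no numerical check, so your remark that crude bounds are not enough is too pessimistic.

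Your finer grouping by $q$ is a legitimate alternative (done properly, it shows $E_m\to\gamma$), but as sketched it has a weak point and is unfinished. Take $Q=\sqrt m$, and bound $\frac mq\sum_{I_q}a^{-2}$ and $\sum_{I_q}a^{-1}$ separately by Lemma~\ref{lm: sums}, using worst-case estimates for $u=\lfloor m/(q+1)\rfloor$ and $v=\lfloor m/q\rfloor$. Then the resulting bound for block $q$ exceeds $\frac1q-\log(1+\frac1q)$ by about $2q/m$. Summed over $q\le\sqrt m$ this is about $1$, which swamps your margin. Your claimed $O(m^{-1/2})$ therefore needs an $O(1/m)$ error per block. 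You can get it by treating each block as one sum: $f_q(x)=(m-qx)/(qx^2)$ is nonnegative and decreasing on $(0,m/q]$. Hence $\sum_{a=u+1}^{v}f_q(a)\le\int_u^{m/q}f_q\le\frac1q-\log(1+\frac1q)+f_q(u)$, with $f_q(u)=O(1/m)$ for $q\le\sqrt m$. Alternatively, treat only $q=1,2$ exactly and $a\le m/3$ trivially; the main terms then total $\frac32-\log 2\approx0.81$ and the error is $O(1/m)$. Either way, you still have to make $m_0$ explicit and carry out the check for $m<m_0$.

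Note also that $\sum_{q\ge1}\bigl(\frac1q-\log(1+\frac1q)\bigr)=\gamma\approx0.577$, not $1-\gamma$, so the margin is about $0.42$. Likewise $E_3=\frac14$ and $E_4=\frac19$, not the values you give. Such slips matter when the argument ends in a numerical verification.
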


\begin{proof}
Since $m=aq_a+r_a$, we have
$$
\frac{m}{a^2q_a}=\frac1a+\frac{r_a}{a^2q_a}.
$$
If we sum over $2\le a\le m$ and use $\sum_{a=2}^{m}\tfrac1a=H_m-1$, we will get that the assertion from the statement is equivalent to the inequality 
$$
\sum_{a=2}^{m}\frac{r_a}{a^2q_a}<1.
$$
Let us denote the left hand side of the above inequality by $\Sigma_m$. We have $\Sigma_2=0$ and $\Sigma_3=\tfrac14$, so let $m\geq4$ and set $h=\lfloor m/2\rfloor\geq2$.

If $2\le a\le h$, then observe that the inequality $r_a(m-a)<a(m-r_a)$ is equivalent to $r_am<am$, which is valid as $r_a<a$. Hence,
$$
\frac{r_a}{a^2q_a}=\frac{r_a}{a(m-r_a)}<\frac{1}{m-a}.
$$

If $h<a\le m$, then $a>\tfrac m2$, so $q_a=1$ and $r_a=m-a$. Thus,
$$
\frac{r_a}{a^2q_a}=\frac{m-a}{a^2}=\frac{m}{a^2}-\frac1a.
$$

Assume first that $m=2h$. The substitution $j=m-a$ and Lemma~\ref{lm: sums} give
$$
\sum_{a=2}^{h}\frac{1}{m-a}
=
\sum_{j=h}^{2h-2}\frac1j
<
\log\frac{2h-2}{h-1}
=
\log2.
$$
Moreover, by Lemma~\ref{lm: sums},
$$
2h\sum_{a=h+1}^{2h}\frac{1}{a^2}
<
2h\left(\frac{2}{2h+1}-\frac{2}{4h+1}\right)
=
\frac{8h^2}{(2h+1)(4h+1)}
=
1-\frac{6h+1}{(2h+1)(4h+1)}
$$
and
$$
\sum_{a=h+1}^{2h}\frac1a
>
\log\frac{2h+1}{h+1}.
$$
Combining all of the above estimates together, we obtain
$$
\Sigma_m
<
\log2
+1-\frac{6h+1}{(2h+1)(4h+1)}
-\log\frac{2h+1}{h+1}
=
1-\frac{6h+1}{(2h+1)(4h+1)}
+\log\frac{2h+2}{2h+1}.
$$
By Lemma~\ref{lm: sums},
$$
\log\frac{2h+2}{2h+1}
<
\frac{1}{2h+1}
=
\frac{4h+1}{(2h+1)(4h+1)}
\le
\frac{6h+1}{(2h+1)(4h+1)},
$$
and, thus, $\Sigma_m<1$.

Assume now that $m=2h+1$. Analogously, we get that
$$
\sum_{a=2}^{h}\frac{1}{m-a}
=
\sum_{j=h+1}^{2h-1}\frac1j
<
\log\frac{2h-1}{h},
$$
$$
(2h+1)\sum_{a=h+1}^{2h+1}\frac{1}{a^2}
<
(2h+1)\left(\frac{2}{2h+1}-\frac{2}{4h+3}\right)
=
\frac{4h+4}{4h+3}
=
1+\frac{1}{4h+3}
$$
and
$$
\sum_{a=h+1}^{2h+1}\frac1a
>
\log\frac{2h+2}{h+1}
=
\log2.
$$
Combining these estimates all together, we deduce that
$$
\Sigma_m
<
\log\frac{2h-1}{h}
+1+\frac{1}{4h+3}
-\log2
=
1+\frac{1}{4h+3}
-\log\frac{2h}{2h-1}.
$$
Once again, Lemma~\ref{lm: sums} asserts that
$$
\log\frac{2h}{2h-1}
>
\frac{1}{2h}
>
\frac{1}{4h+3},
$$
and therefore $\Sigma_m<1$ in both cases, which completes the proof.
\end{proof}

We are now in the position to proceed to the main part of the paper.

\section{Proof of Theorem \ref{thm: main}}\label{sec: proof}

 As indicated in Introduction, the proof of Theorem \ref{thm: main} is divided into two parts. We first establish the upper bound for $p(m, n)$, and then investigate the corresponding lower bound.
 
\begin{proof}[Proof of Theorem \ref{thm: main}]
At first, let us assume that $m\geq2$ is fixed.

\medskip
\noindent\emph{Upper bound.}
For a non-negative integer $\ell$, let $U_m(\ell)$ denote the number of multisets of integer-sided rectangles written in the canonical form $a\times b$, where $1\le a\le m$ and $a\le b$, whose total area is $\ell$. The multiset does not have to form a tiling. Let $n\geq m$. As observed in Section~\ref{sec: preliminaries}, the smaller side of every rectangle in a partition of the rectangle $m\times n$ is at most $m$, and the total area of the rectangles equals $mn$. Hence, every partition of the rectangle $m\times n$ is a multiset counted by $U_m(mn)$, and
$$
p(m,n)\le U_m(mn).
$$
Observe that every pair $(a,b)$ with $1\le a\le m$ and $a\le b$ corresponds to exactly one canonical rectangle, so the generating function of the sequence $U_m(\ell)$ is
$$
F_m(x)
=
\sum_{\ell=0}^{\infty}U_m(\ell)x^{\ell}
=
\prod_{a=1}^{m}\prod_{b=a}^{\infty}\frac{1}{1-x^{ab}},
\qquad
\text{for }0<x<1.
$$
Every omitted factor satisfies $\tfrac{1}{1-x^{ab}}\geq1$. Hence, Lemma~\ref{lm: euler product} applied with $u=at$ guarantees that
$$
\log F_m\left(e^{-t}\right)
\le
\sum_{a=1}^{m}\sum_{b=1}^{\infty}\log\frac{1}{1-e^{-abt}}
\le
\sum_{a=1}^{m}\frac{\pi^2}{6at}
=
\frac{\pi^2H_m}{6t}
$$
for every $t>0$. Since all the coefficients of $F_m$ are non-negative, we have $U_m(mn)e^{-tmn}\le F_m\left(e^{-t}\right)$, and hence
$$
\log p(m,n)
\le
tmn+\frac{\pi^2H_m}{6t}.
$$
Putting $t=\pi\sqrt{\tfrac{H_m}{6mn}}$, we obtain $tmn=\tfrac{\pi^2H_m}{6t}=\pi\sqrt{\tfrac{mH_mn}{6}}$, and therefore
\begin{equation}\label{eq: UB}
\log p(m,n)
\le
\pi\sqrt{\tfrac{2mH_m}{3}}\sqrt n
\end{equation}
for all $n\geq m$.

\medskip
\noindent\emph{Lower bound.}
Put $K=m^2$, and let $\sigma_\ell$, $\nu_\ell$ and $\mathcal A(A,\ell)$ be defined as in Lemma~\ref{lm: families}. For $1\le a\le m$, we set
$$
T_{a,n}=\left\lfloor\frac{mn}{H_ma^2}\right\rfloor,
\qquad
\nu_{a,n}=\nu_{T_{a,n}},
$$
and, for $2\le a\le m$,
$$
q_a=\left\lfloor\frac{m}{a}\right\rfloor,
\qquad
c_{a,n}=\left\lceil\frac{T_{a,n}}{q_a}\right\rceil+\nu_{a,n}.
$$
Since $m$ is fixed, $T_{a,n}$ tends to infinity as $n\to\infty$ for every $a$, so all these parameters are well defined for all sufficiently large values of  $n$. Furthermore, we have that
$$
\nu_{a,n}\le2\sqrt{T_{a,n}}\log T_{a,n}+1=O\left(\sqrt n\log n\right),
$$
as $\sigma_\ell\leq\ell$, and $a\le m\le\nu_{a,n}$ for all sufficiently large $n$. Choose independently a~partition $\lambda_1\in\mathcal A(2,T_{1,n})$ and, for every $a\in\{2,3,\ldots,m\}$, a~partition $\lambda_a\in\mathcal A(a,T_{a,n})$. Replace every part $b$ of $\lambda_a$ by a rectangle $a\times b$; this is admissible, because $a\le b\le\nu_{a,n}$ for $a\geq2$, and $2\le b\le\nu_{1,n}$ for $a=1$.

For every $a\in\{2,3,\ldots,m\}$, reserve a column of height $m$ and width $c_{a,n}$, consisting of $q_a$ horizontal strips of height $a$, stacked one on the top of the other, and of $m-aq_a$ horizontal strips of height $1$ above them. The columns are placed side by side, starting from the left side of the rectangle $m\times n$. Since $m$ is fixed,
$$
\sum_{a=2}^{m}c_{a,n}
\le
\sum_{a=2}^{m}\frac{T_{a,n}}{q_a}+(m-1)+\sum_{a=2}^{m}\nu_{a,n}
\le
\frac{mn}{H_m}\sum_{a=2}^{m}\frac{1}{a^2q_a}
+O\left(\sqrt n\log n\right).
$$
Lemma~\ref{lm: harmonic} asserts that
$$
\theta_m:=\frac{m}{H_m}\sum_{a=2}^{m}\frac{1}{a^2q_a}<1.
$$
Hence,
$$
\sum_{a=2}^{m}c_{a,n}
\le
\theta_mn+O\left(\sqrt n\log n\right)
\le
n
$$
for all sufficiently large $n$, so all the columns fit side by side inside the rectangle $m\times n$. The part of the rectangle $m\times n$ to the right of all of the aforementioned columns is a rectangle of height $m$.

Fix $a\in\{2,3,\ldots,m\}$. The rectangles corresponding to $\lambda_a$ have common height $a$, their widths are at most $\nu_{a,n}$, and their total width satisfies
$$
|\lambda_a|
\le
\sigma_{T_{a,n}}
\le
T_{a,n}
\le
q_a\left\lceil\frac{T_{a,n}}{q_a}\right\rceil
=
q_a\left(c_{a,n}-\nu_{a,n}\right).
$$
Lemma~\ref{lm: strips}, applied with $k=q_a$, $\mu=\nu_{a,n}$ and $s=a$, maintains that these rectangles can be packed into the $q_a$ reserved strips of height $a$ and width $c_{a,n}$, in such a way that the unused part of every strip is a rectangle of height $a$.

We now describe the part of the rectangle $m\times n$ that remains uncovered. Divide the unused rectangle of height $a$ at the end of every strip into $a$ horizontal strips of height $1$, and divide the rectangle to the right of all the columns into $m$ horizontal strips of height $1$. Every column contributes at most $aq_a+(m-aq_a)=m$ strips of height $1$, so, together with the strips obtained from the rectangle on the right, the uncovered part of the rectangle $m\times n$ is a disjoint union of at most
$$
m(m-1)+m=m^2
$$
horizontal strips of height $1$ with integer widths. The total width of these strips equals their total area, and
$$
mn-\sum_{a=2}^{m}a|\lambda_a|
\geq
mn-\sum_{a=2}^{m}aT_{a,n}
\geq
mn-\frac{mn}{H_m}\sum_{a=2}^{m}\frac1a
=
\frac{mn}{H_m}
\geq
T_{1,n}.
$$
Remove the strips of width smaller than $\nu_{1,n}$, and let $c_1,c_2,\ldots,c_r$ denote the widths of the remaining strips, so that $c_j\geq\nu_{1,n}$ for every $1\leq j\leq r$. The total number of both the removed strips and the remaining ones is at most $m^2$. Thus,
$$
\sum_{j=1}^{r}c_j-r\nu_{1,n}
\geq
T_{1,n}-m^2\nu_{1,n}.
$$
Lemma~\ref{lm: families}, applied with $K=m^2$, ensures that
$$
|\lambda_1|
\le
\sigma_{T_{1,n}}
\le
T_{1,n}-m^2\nu_{1,n}
\le
\sum_{j=1}^{r}c_j-r\nu_{1,n}
$$
for all sufficiently large $n$. Lemma~\ref{lm: strips}, applied with $k=r$, $\mu=\nu_{1,n}$ and $s=1$, shows that the rectangles corresponding to $\lambda_1$ can be packed into the remaining strips. At the end, fill every remaining unit of area with a square $1\times1$. The resulting multiset is a partition of the rectangle $m\times n$.

The construction is injective. The added rectangles are squares $1\times1$, while every selected rectangle has its larger side at least $2$. After deleting the unit squares, every remaining rectangle has the canonical form $a\times b$ with $a\le b$, hence its smaller side determines whether the rectangle comes from $\lambda_1$ (smaller side equal to $1$) or from $\lambda_a$ (smaller side equal to $a\geq2$), while its larger side recovers the corresponding part. Therefore, the selected partitions $\lambda_1,\lambda_2,\ldots,\lambda_m$ can be recovered from the resulting multiset. Since the choices are independent,
$$
p(m,n)
\geq
|\mathcal A(2,T_{1,n})|
\prod_{a=2}^{m}|\mathcal A(a,T_{a,n})|.
$$
Lemma~\ref{lm: families} gives
$$
\log p(m,n)
\geq
\pi\sqrt{\tfrac23}\sum_{a=1}^{m}\sqrt{T_{a,n}}
+O(\log n).
$$
Since $0\le\tfrac{mn}{H_ma^2}-T_{a,n}<1$, we have
$$
\sqrt{\frac{mn}{H_ma^2}}-\sqrt{T_{a,n}}
\le
\frac{1}{\sqrt{T_{a,n}}}
=
O\left(n^{-1/2}\right),
$$
and therefore
$$
\sum_{a=1}^{m}\sqrt{T_{a,n}}
=
\sqrt{\frac{mn}{H_m}}\sum_{a=1}^{m}\frac1a
+O\left(n^{-1/2}\right)
=
\sqrt{mH_mn}
+O\left(n^{-1/2}\right).
$$
Consequently,
$$
\log p(m,n)
\geq
\pi\sqrt{\tfrac{2mH_m}{3}}\sqrt n
+O(\log n).
$$
That lower bound together with the upper estimate \eqref{eq: UB} completes the proof for $m\geq2$.

For $m=1$ the statement follows from Equality \eqref{eq: HR} and the fact that $p(1,n)=p(n)$. Finally, the upper bound $p(n)\le\exp\left(\pi\sqrt{2n/3}\right)$ is a direct consequence of the argument presented in the first part of the proof.
\end{proof}

\begin{re}
Figure~\ref{fig: columns} illustrates the construction from the proof of the lower bound for $m=5$. In such a setting, $q_2=2$ and $q_3=q_4=q_5=1$. The hatched strips receive the rectangles corresponding to $\lambda_2,\lambda_3,\lambda_4,\lambda_5$. The white strips of height $1$, i.e., the strips at the tops of the columns, the strips obtained from the unused parts of the hatched strips, and the strips obtained from the rectangle to the right of all the columns, receive the rectangles corresponding to $\lambda_1$ and the unit squares. The widths of the columns are drawn in the proportions $c_{a,n}:c_{a',n}$ determined by the main terms $\tfrac{T_{a,n}}{q_a}=\tfrac{mn}{H_ma^2q_a}+O(\sqrt n\log n)$. The remaining rectangle on the right occupies approximately the fraction $1-\theta_5\approx0.2585$ of the width $n$.
\end{re}

\begin{figure}[ht]
\centering
\begin{tikzpicture}[x=1cm,y=1cm,
  zone/.style={draw,line width=.55pt,align=center,font=\small},
  hatched/.style={pattern={Lines[distance=5pt,angle=45,line width=.3pt]},pattern color=black!40},
  lab/.style={font=\small,fill=white,fill opacity=.85,text opacity=1,inner sep=2pt}]
\def\u{0.62}
\def\xA{0}\def\xB{3.6}\def\xC{6.8}\def\xD{8.6}\def\xE{9.75}\def\xF{13}
\filldraw[zone,hatched] (\xA,0) rectangle (\xB,{2*\u});
\filldraw[zone,hatched] (\xA,{2*\u}) rectangle (\xB,{4*\u});
\filldraw[zone,fill=white] (\xA,{4*\u}) rectangle (\xB,{5*\u});
\node[lab] at ({(\xA+\xB)/2},{\u}) {$2\times c_{2,n}$};
\node[lab] at ({(\xA+\xB)/2},{3*\u}) {$2\times c_{2,n}$};
\filldraw[zone,hatched] (\xB,0) rectangle (\xC,{3*\u});
\filldraw[zone,fill=white] (\xB,{3*\u}) rectangle (\xC,{4*\u});
\filldraw[zone,fill=white] (\xB,{4*\u}) rectangle (\xC,{5*\u});
\node[lab] at ({(\xB+\xC)/2},{1.5*\u}) {$3\times c_{3,n}$};
\filldraw[zone,hatched] (\xC,0) rectangle (\xD,{4*\u});
\filldraw[zone,fill=white] (\xC,{4*\u}) rectangle (\xD,{5*\u});
\node[lab] at ({(\xC+\xD)/2},{2*\u}) {$4\times c_{4,n}$};
\filldraw[zone,hatched] (\xD,0) rectangle (\xE,{5*\u});
\node[lab,align=center] at ({(\xD+\xE)/2},{2.5*\u}) {$5\times$\\ $c_{5,n}$};
\filldraw[zone,fill=white] (\xE,0) rectangle (\xF,{\u});
\filldraw[zone,fill=white] (\xE,{\u}) rectangle (\xF,{2*\u});
\filldraw[zone,fill=white] (\xE,{2*\u}) rectangle (\xF,{3*\u});
\filldraw[zone,fill=white] (\xE,{3*\u}) rectangle (\xF,{4*\u});
\filldraw[zone,fill=white] (\xE,{4*\u}) rectangle (\xF,{5*\u});
\draw[line width=1pt] (\xA,0) rectangle (\xF,{5*\u});
\node[font=\footnotesize] at (-0.45,{2.5*\u}) {$m$};
\node[font=\footnotesize] at ({(\xA+\xF)/2},-0.45) {$n$};
\end{tikzpicture}
\caption{The decomposition of the rectangle $m\times n$ used in the proof of the lower bound of Theorem~\ref{thm: main} for $m=5$. The hatched strips are tiled by the rectangles coming from $\lambda_2,\lambda_3,\lambda_4,\lambda_5$, and the white strips of height $1$ receive the rectangles coming from $\lambda_1$ together with the unit squares.}
\label{fig: columns}
\end{figure}

\begin{re}
For $m\le3$, the known asymptotic formulae for $p(m,n)$ exhibit a polynomial factor in front of the exponential term; see \cite{GVZ, GZ}. Determining this factor for a fixed $m\geq4$ remains an open problem.
\end{re}

\section*{Acknowledgments}
K. G. was supported by the National Science Center grant no.~2024/53/N/ST1/01538.

\end{document}